\newcommand{\dd}{\mathrm{d}}
\newcommand{\D}{\ensuremath{\mathcal{D}}}
\newcommand{\loc}{\ensuremath{\text{loc}}}
\newcommand{\mb}[1]{\ensuremath{\mathbb{#1}}}
\newcommand{\R}{\mb{R}}
\newcommand{\sgn}{\mathop{\mathrm{sgn}}}
\renewcommand{\d}{\ensuremath{\partial}}
\newcommand{\diff}[1]{\frac{d}{d#1}}
\newfont{\bl}{msbm10 scaled \magstep2}
\newtheorem{theorem}{Theorem}[section]
\newtheorem{lemma}[theorem]{Lemma}
\newtheorem{proposition}[theorem]{Proposition}
\newtheorem{definition}[theorem]{Definition}
\theoremstyle{definition}
\newtheorem{remark}[theorem]{Remark}
\newtheorem{example}[theorem]{Example}
\newcommand{\beq}{\begin{equation}}
\newcommand{\eeq}{\end{equation}}
\newcommand{\col}{\colon}
\newcommand{\FT}[1]{\widehat{#1}}
\newcommand{\notmid}{\mid\kern-0.5em\not\kern0.5em}
\newcommand{\norm}[2]{{\| #1 \|}_{#2}}
\newcommand{\Norm}[1]{\norm{#1}{}}
\newcommand{\al}{\alpha}
\newcommand{\de}{\delta}
\newcommand{\vphi}{\varphi}
\newcommand{\la}{\lambda}
\newcommand{\supp}{\mathop{\mathrm{supp}}}
\begin{document}

\pagestyle{plain}

\title{Discontinuous traveling waves as weak solutions to the Fornberg-Whitham equation}

\author{G\"unther H\"ormann}

\address{Fakult\"at f\"ur Mathematik\\
Universit\"at Wien, Austria}

\email{guenther.hoermann@univie.ac.at}

%\thanks{}

%\subjclass[2010]{35Q53, 35B44}

%\keywords{periodic Fornberg-Whitham equation, wave breaking, blow-up, Sobolev spaces}

\date{\today}

\begin{abstract} We analyze the weak solution concept for the Fornberg-Whitham equation in case of traveling waves with a piecewise smooth profile function. The existence of discontinuous weak traveling wave solutions is shown by means of analysis of a corresponding planar dynamical system and appropriate patching of disconnected orbits.
\end{abstract}

\maketitle

%%%%%%%%%%%%%%%%%%%%%%%%%%%%%%%%%%
%%%%%%%%%%%%%%%%%%%%%%%%%%%%%%%%%%
%%%%%%%%%%%%%%%%%%%%%%%%%%%%%%%%%%

\section{Basic concepts}

\subsection{Introduction}

The Fornberg-Whitham equation has been introduced as one of the simplest shallow water wave models which are still capable of incorporating wave breaking (cf.\ \cites{Seliger68,Whitham1974,FB78,NaumShish94, ConstantinEscher1998, Haziot2017,GH2018}). The wave height is described by a function of space and time $u \col \R \times [0,\infty[ \to \R$, $(x,t) \mapsto u(x,t)$, we will occasionally write $u(t)$ to denote the function $x \mapsto u(x,t)$. Suppose that an initial wave profile $u_0$ is given as a real-valued function on $\R$. The Cauchy problem for the Fornberg-Whitham equation is
\begin{align}
   \label{FWEqu}  u_t +  u u_x + K \ast u_x  &= 0,\\
     \label{IC} u(x,0) &= u_0(x),
\end{align}
where the convolution is in the $x$-variable only and  
\beq\label{kernel}
  K(x) = \frac{1}{2} e^{-|x|},  
\eeq
which satisfies $(1 - \d_x^2) K = \delta$.

We note that formally applying $1 - \d_x^2$ to \eqref{FWEqu} produces a third order partial differential equation 
$$
     u_t - u_{txx}  - 3 u_x u_{xx} - u u_{xxx} +  u u_x + u_x  = 0,
$$
but we will stay with the above non-local equation which correponds to the original model and is also more suitable for the weak solution concept.

\begin{remark} Note that we follow here in \eqref{FWEqu} the sign convention for the convolution term as used in  \cite[Equation (4)]{FB78} (or also in \cite[Section 13.14]{Whitham1974}), but used a rescaling of the solution by $3/2$ to get rid of an additional constant factor in the nonlinear term. 
\end{remark}

Well-posedness results on short time intervals for (\ref{FWEqu}-\ref{IC}) with spatial regularity according to Sobolev or Besov scales have been obtained in \cites{Holmes16,HolTho17}. For example, in terms of Sobolev spaces these read as follows:
If $s > 3/2$ and $u_0 \in H^s(\R)$, then there exists $T_0 > 0$ such that (\ref{FWEqu}-\ref{IC}) possesses a unique solution $u \in C([0,T_0], H^s(\R)) \cap C^1([0,T_0],H^{s-1}(\R))$; moreover, the map $u_0 \mapsto u$ is  continuous $H^s(\R) \to C([0,T_0], H^s(\R))$ and $\sup_{t \in [0,T_0]} \norm{u(t)}{H^s(\R)} < \infty$.

\subsection{Weak solution concept}

Equation \eqref{FWEqu} can formally be rewritten in the form 
\beq\label{FWEqu2}
  \d_t u +  \d_x \big(\frac{u^2}{2}\big) + K' \ast u  = 0, 
\eeq
which suggests to define weak solutions in the context of locally bounded measurable functions in the following way. 
\begin{definition}
A function $u \in L^\infty_\loc(\R \times [0,\infty[)$ is called a \emph{weak solution} of the Cauchy problem (\ref{FWEqu}-\ref{IC}) with initial value $u_0 \in L^\infty_\loc(\R)$, if 
\begin{multline}\label{weaksol}
  \int_0^\infty \int_{-\infty}^\infty \Big( 
    - u(x,t) \d_t \phi(x,t) - \frac{u^2(x,t)}{2} \d_x \phi(x,t) + 
    \big(K' \ast u(.,t)\big)(x) \phi(x,t) \Big) \dd x \dd t \\ = 
  \int_{-\infty}^\infty u_0(x) \phi(x,0) \dd x 
\end{multline}
holds for every test function $\phi \in \D(\R^2)$.
\end{definition}

\begin{remark} In the current paper we will not discuss uniqueness or well-posedness of general weak solutions, which might also require to introduce the concept of an \emph{entropy solution} $u \in L^\infty_\loc(\R \times [0,\infty[)$ with initial value $u_0 \in L^\infty_\loc(\R)$ in the sense that
\begin{multline*}
  0 \leq \int_0^\infty \int_{-\infty}^\infty \Big( 
    \left| u(x,t) - \la \right|  \d_t \phi(x,t) + \sgn \big(u(x,t) - \la\big) \frac{u^2(x,t) - \la^2}{2} 
    \d_x \phi(x,t)\big)(x) \Big) \dd x \dd t \\
     -  \int_0^\infty \int_{-\infty}^\infty \Big( \sgn \big(u(x,t) - \la \big) 
     \big(K' \ast (u(.,t) - \la)\big)(x)
  \Big) \phi(x,t) \dd x \dd t 
  +   \int_{-\infty}^\infty \left| u_0(x) - \la \right| \phi(x,0) \dd x 
\end{multline*}
holds for every nonnegative test function $\phi \in \D(\R^2)$ and every $\la \in \R$.
We note that this entropy condition implies \eqref{weaksol}, since  for any given $\phi$ we may choose $\la = -r$ and $\la = r$, where $r > 0$ is sufficiently large such that $|u| < r$ holds on $\supp(\phi)$. Thus, every entropy solution is a weak solution of the Cauchy problem (\ref{FWEqu}-\ref{IC}).
\end{remark}  
  
\begin{example}[Peakon as weak solution]

We consider the well-known peakon-type traveling wave \cite{FB78,ZT2010,CLH2012} for the Fornberg-Whitham equation, namely
\beq\label{peakon}
   p(x,t) = \frac{4}{3} \exp({- \frac{1}{2} |x - \frac{4}{3} t|}) = U(x - \frac{4}{3} t),
\eeq
where $U(y) := \frac{4}{3} \exp({- \frac{1}{2} |y|})$ is the profile function. It can be described as the solitary wave of greatest height (see \cite[Section 13.14]{Whitham1974}, \cite[Section 6]{FB78} or the more detailed discussions of traveling solitary-wave solutions of the governing equations for two-dimensional water waves propagating in irrotational flow over a flat bed in \cite{CE2007} and \cite{AFT82}).
We observe that  $$
  p(.,0) = U \in H^s(\R) \quad\Longleftrightarrow\quad s < 3/2, 
$$  
which is easily seen from the fact that $\FT{U}(\xi)$ is proportional to $1/(1 + \xi^2)$ (the precise constants depending on the convention of the Fourier transform). Therefore, $p$ has less spatial regularity than required for the strong solution concept and for the well-posedness result mentioned in the introductory subsection. 

It is easy to see that from the peakon given in \eqref{peakon} we obtain a weak solution with initial value $u_0 = U$: We may calculate directly (e.g., as in \cite[Appendix]{Holmes16})  $(1 - \d_x^2) p= 3 p/4 + 4 \de(x - 4t/3) / 3$ and $(1 - \d_x^2) (p^2) = 32 \de(x - 4t/3) / 9$, which implies that $(1 - \d_x^2)(\d_t p + \d_x (p^2/2)) = \d_t ((1 - \d_x^2) p) + \d_x (1 - \d_x^2) (p^2)/2 = - \d_x p$ holds in the sense of distributions\footnote{But note that we insisted on $p^2$ being carried out as pointwise product of functions prior to differentiation.} on $\R^2$ and therefore, upon applying $(1-\d_x^2)^{-1}$ in the form of spatial convolution with $K$, that \eqref{FWEqu2} holds for $p$ on all of $\R^2$; now putting $u (x,t) := p(x,t) H(t)$ (where $H$ denotes the Heaviside function) and observing that $\d_t \big( p(x,t) H(t)\big) =  p(x,0) \de(t) +  \d_t p(x,t) H(t)$ (by checking the action on a test function) we arrive at $\d_t u + \d_x(u^2/2) + K' \ast u = p(.,0) \de(t)  + (\d_t p + \d_x(p^2/2) + K' \ast p) H(t) = p(.,0) \otimes \de + 0 = u_0 \otimes \de $, which means exactly \eqref{weaksol} when applied to a test function (and noting that $t \geq 0$ in $\supp(u)$ by construction).
\end{example}

The following section is devoted to the construction of traveling wave solutions which are bounded and discontinuous.

%%%%%%%%%%%%%%%%%%%%%%%%%%%%%%%%%%%%%%%%%%
%%%%%%%%%%%%%%%  SECTION   %%%%%%%%%%%%%%%%%%%%%
%%%%%%%%%%%%%%%%%%%%%%%%%%%%%%%%%%%%%%%%%%

\section{Bounded traveling waves with discontinuity}

The possible continuous traveling waves for the Fornberg-Whitham equation have been obtained and classified successfully by means of studying the properties of corresponding ordinary differential equations for the profile function, e.g., in \cite{ZT2008,ZT2009,ZT2010,CLH2012}). In our current attempt to construct a discontinuous bounded traveling wave, we will make use of a similar basic strategy and draw on many ideas from these references. In particular, we have to make a somewhat refined analysis of several steps along the way to a correponding first-order system of ordinary differential equations for  the profile function and its derivative. Finally, we will have to find a correct way for patching up a profile function from two disconnected orbits in the topological dynamics. The inspiration for the whole construction stems from a discussion of traveling waves with shocks for a model of radiating gas, the so-called Rosenau model, given in \cite{KN99}.

The typical Ansatz for a traveling wave solution is $u(x,t) = W(x-ct)$ with a profile function $W \col \R \to \R$ and $c \in \R$. We suppose that $W$ is \emph{piecewise $C^2$} in the following sense
\beq\label{pwsmooth}
   \text{$W$ is a $C^2$ function off $0$ and $W, W', W''$ possess one-sided limits at $0$}
\eeq
and, in addition, we require that there exist $A, B \in \R$ such that 
\beq\label{BC}
       \lim_{\xi \to -\infty} W(\xi) = A \quad\text{and}\quad 
       \lim_{\xi \to +\infty} W(\xi) = B.
\eeq
In particular, $W$ belongs to $L^\infty(\R)$ and $u(x,0) = W(x)$.

\subsection{Traveling waves as weak solutions}

A traveling wave $u$ with piecewise smooth profile function $W$ is a weak solution, if and only if for every test function $\phi$ on $\R^2$ we have (upon a change of variables $\xi = x -c t$ in the integrals on the left-hand side of \eqref{weaksol} and with explicit convolution integral)
\begin{multline*}
  \int_0^\infty \int_{-\infty}^\infty \Big( 
    - W(\xi) \d_2 \phi(\xi + ct,t) - \frac{W^2(\xi)}{2} \d_1 \phi(\xi + c t,t) \Big) \dd \xi \dd t\\
     + \int_0^\infty \int_{-\infty}^\infty \int_{-\infty}^\infty
    K'(z) W(\xi - z) \dd z \,\phi(\xi + ct,t)  \dd \xi \dd t =
  \int_{-\infty}^\infty W(x) \phi(x,0) \dd x .
\end{multline*}
In the first integral we make use of the relation $\d_2 \phi(\xi + ct,t) = \diff{t}\big( \phi(\xi + ct, t)\big) - c \d_1 \phi(\xi + ct, t)$, split the $\xi$-integral into two parts according to $\xi < 0$ and $\xi > 0$, and apply integration by parts. Thus, we obtain 
\begin{multline}\label{weaktravel}
  \int_0^\infty \phi(ct,t) \Big( \frac{W^2(0+) - W^2(0-)}{2} 
      + c \big(W(0-) - W(0+)\big) \Big)\dd t\\ 
      + \int_0^\infty \int_{-\infty}^0  \phi(\xi + ct,t) \Big(
      W(\xi) W'(\xi) - c W'(\xi) + (K' \ast W)(\xi)
      \Big) \dd \xi \dd t \\
      + \int_0^\infty \int_{0}^\infty  \phi(\xi + ct,t) \Big(
      W(\xi) W'(\xi) - c W'(\xi) + (K' \ast W)(\xi)
      \Big) \dd \xi \dd t
      = 0.
\end{multline}
Observe that due to the properties of $W$, the $\xi$-integrals could be re-combined into one integration over $\R$, but this could cause a misunderstanding about the exact meaning of the differential equation we want to extract from the above condition. First we note the intermediate result.
\begin{proposition} A traveling wave $u$ with piecewise smooth profile function $W$ (in the sense of \eqref{pwsmooth}) is a weak solution, if and only if  \eqref{weaktravel} holds for every $\phi \in \D(\R^2)$.
\end{proposition}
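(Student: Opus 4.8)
The plan is to obtain \eqref{weaktravel} by a direct and reversible transformation of the defining identity \eqref{weaksol}, so that the asserted equivalence holds for each test function separately. First I would insert $u(x,t) = W(x-ct)$ and $u_0(x) = W(x)$ into \eqref{weaksol} and, for each fixed $t$, perform the affine change of variables $\xi = x - ct$ in the spatial integrals; this has unit Jacobian and does not affect the class $\D(\R^2)$. The only transformation worth checking here is the nonlocal term: since a translation commutes with convolution, $\big(K' \ast u(.,t)\big)(x) = \big(K' \ast W\big)(x-ct)$, so after the substitution it becomes $\big(K' \ast W\big)(\xi)$ paired with $\phi(\xi+ct,t)$. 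Note that $K' \ast W$ is a bounded continuous function, because $K' \in L^1(\R)$ and $W \in L^\infty(\R)$ by \eqref{BC}, so this term raises no integrability or regularity concern. The right-hand side of \eqref{weaksol} is left unchanged by the substitution.

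Next I would dispose of the time-derivative term. Using $\d_2 \phi(\xi+ct,t) = \diff{t}\big(\phi(\xi+ct,t)\big) - c\, \d_1 \phi(\xi+ct,t)$ together with $\d_\xi\big(\phi(\xi+ct,t)\big) = \d_1 \phi(\xi+ct,t)$, the term $-W(\xi)\,\d_2\phi(\xi+ct,t)$ splits as $-W(\xi)\diff{t}\big(\phi(\xi+ct,t)\big) + c\,W(\xi)\,\d_1\phi(\xi+ct,t)$. Integrating the first piece over $t \in [0,\infty[$ and using that $\phi$ has compact support gives exactly $\int_{-\infty}^{\infty} W(\xi)\,\phi(\xi,0)\,\dd\xi$, which cancels the right-hand side of \eqref{weaksol}; the second piece combines with $-\tfrac12 W^2(\xi)\,\d_1\phi(\xi+ct,t)$ into $\big(c\,W(\xi) - \tfrac12 W^2(\xi)\big)\,\d_\xi\big(\phi(\xi+ct,t)\big)$.

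Finally I would split the remaining $\xi$-integral at $0$ --- this is the step that uses the piecewise $C^2$ hypothesis \eqref{pwsmooth}, since on each of the half-lines $\xi<0$ and $\xi>0$ the function $\xi \mapsto c\,W(\xi) - \tfrac12 W^2(\xi)$ is $C^1$ with one-sided limits at $0$ --- and integrate by parts in $\xi$ over $\{\xi<0\}$ and $\{\xi>0\}$ separately. The boundary contributions at $\pm\infty$ vanish because $\phi(\xi+ct,t)$ is zero for $|\xi|$ large; the boundary contributions at $0^-$ and $0^+$ assemble into $\big(\tfrac{W^2(0+)-W^2(0-)}{2} + c\,(W(0-)-W(0+))\big)\,\phi(ct,t)$; and the interior terms yield $W W' - c\,W' + K' \ast W$ on each half-line, integrated against $\phi(\xi+ct,t)$. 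Collecting the three groups of terms gives precisely \eqref{weaktravel}. Since every step above is an identity valid for each fixed $\phi \in \D(\R^2)$ and none of the steps alters the admissible class of test functions, running the chain forwards yields ``weak solution $\Rightarrow$ \eqref{weaktravel}'' and running it backwards yields the converse. I do not anticipate a genuine obstacle; the only point requiring care is the bookkeeping of the boundary terms --- in particular that the $t=0$ contribution exactly absorbs the initial-value integral on the right-hand side, and that the two one-sided limits at $\xi = 0$ combine into the stated jump coefficient.
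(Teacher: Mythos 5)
Your proposal is correct and follows essentially the same route as the paper: substitute the traveling-wave Ansatz, change variables $\xi = x - ct$, rewrite $\d_2\phi(\xi+ct,t)$ via the total $t$-derivative, split the $\xi$-integral at $0$, and integrate by parts, with every step reversible for each fixed $\phi$. The only detail you spell out beyond the paper's sketch is the cancellation of the $t=0$ boundary term against the initial-value integral, which is exactly as you describe.
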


We note that the linear span of test functions of the form $\phi(x,t) = \vphi_1(x - ct) \vphi_2(t)$ with $\vphi_1, \vphi_2 \in \D(\R)$ is a dense subspace of $\D(\R^2)$, hence \eqref{weaktravel} may be reduced to 
\begin{multline*}
  \vphi_1(0) \int_0^\infty \vphi_2(t) \Big( \frac{W^2(0+) - W^2(0-)}{2} 
      + c \big(W(0-) - W(0+)\big) \Big)\dd t\\ 
      + \int_0^\infty \vphi_2(t) \dd t \left( \; \int_{-\infty}^0  \vphi_1(\xi) \Big(
      W(\xi) W'(\xi) - c W'(\xi) + (K' \ast W)(\xi)
      \Big) \dd \xi \dd t \right.\\
      + \left. \int_{0}^\infty  \vphi_1(\xi) \Big(
      W(\xi) W'(\xi) - c W'(\xi) + (K' \ast W)(\xi)
      \Big) \dd \xi \right)   = 0.
\end{multline*}
Choosing $0 \leq \vphi_1 \leq 1$ with support arbitrarily close to $0$ and $\vphi_1(0) = 1$ while letting $\vphi_2$ vary in $\D(\R)$ we deduce $(W^2(0+) - W^2(0-))/2 = c (W(0+) - W(0-))$, which yields the \emph{Rankine-Hugoniot condition}
\beq\label{RH}
   W(0+) + W(0-) = 2 c,
\eeq
if $W(0-) \neq W(0+)$. Having observed this we may now choose $\vphi_2$ such that $\int_0^\infty \vphi_2(t) \dd t = 1$ and $\vphi_1$ with support in $\xi < 0$ or in $\xi > 0$, but otherwise arbitrary, and deduce  
\beq\label{WODE1}
  (W(\xi) - c) W'(\xi) + (K' \ast W)(\xi) = 0 \qquad \forall \xi \neq 0.
\eeq
On the other hand, we see that \eqref{RH} and \eqref{WODE1} together imply \eqref{weaktravel}, which along with the above proposition proves the following statement.

\begin{theorem}\label{weaktravelthm} A traveling wave $u$ with piecewise smooth, but discontinuous, profile function $W$  is a weak solution of the Cauchy problem (\ref{FWEqu}-\ref{IC}) with initial value $u_0 = W$, if and only if $W$ satisfies the Rankine-Hugoniot condition \eqref{RH} and the integro-differential equation \eqref{WODE1}.
\end{theorem}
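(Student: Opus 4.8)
The plan is to organize into its two implications the computation already carried out just above the statement; the essential input is the preceding Proposition, which reduces the weak-solution property to the validity of \eqref{weaktravel} for every $\phi \in \D(\R^2)$. First I would note that each of the three summands on the left-hand side of \eqref{weaktravel} defines a continuous linear functional of $\phi$ on $\D(\R^2)$, so it is enough to test against the dense subspace spanned by products $\phi(x,t) = \vphi_1(x-ct)\vphi_2(t)$ with $\vphi_1,\vphi_2 \in \D(\R)$. For such $\phi$ the $t$- and $\xi$-integrations separate, and choosing $\vphi_2$ with $\int_0^\infty \vphi_2\,\dd t \neq 0$ collapses \eqref{weaktravel} to the single scalar identity
$$
  \vphi_1(0)\, J + \int_{-\infty}^0 \vphi_1(\xi)\, g(\xi)\,\dd\xi + \int_0^\infty \vphi_1(\xi)\, g(\xi)\,\dd\xi = 0 \qquad \forall\, \vphi_1 \in \D(\R),
$$
where $J := \frac12\big(W^2(0+) - W^2(0-)\big) + c\big(W(0-) - W(0+)\big)$ and $g(\xi) := (W(\xi) - c)W'(\xi) + (K'\ast W)(\xi)$.

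For the forward implication I would first substitute a family $\vphi_1 = \vphi_1^\eps$ with $0 \le \vphi_1 \le 1$, $\vphi_1(0) = 1$ and $\supp \vphi_1^\eps \subset (-\eps,\eps)$. Since $W$ and $W'$ are bounded near $0$ (they possess one-sided limits there) and $K'\ast W$ is bounded and continuous (as $K' \in L^1(\R)$, $W \in L^\infty(\R)$), the function $g$ is bounded on a neighbourhood of $0$; hence the two integrals tend to $0$ as $\eps \to 0$ while $\vphi_1(0)J = J$ stays fixed, which forces $J = 0$ and, dividing by $W(0+) - W(0-) \neq 0$, yields \eqref{RH}. With $J = 0$ established, taking $\vphi_1$ supported in $\{\xi > 0\}$, respectively $\{\xi < 0\}$, gives $\int \vphi_1 g\,\dd\xi = 0$ over the corresponding half-line for all such $\vphi_1$, so $g \equiv 0$ almost everywhere there, and then identically on $\R \setminus \{0\}$ by continuity of $g$ off $0$; this is \eqref{WODE1}. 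Conversely, if \eqref{RH} and \eqref{WODE1} hold, then $J = 0$ and $g$ vanishes off $0$, so the displayed scalar identity is trivially true for every $\vphi_1$; reversing the separation of variables recovers \eqref{weaktravel} for all product test functions, and density together with the continuity of the three functionals extends it to all of $\D(\R^2)$. An appeal to the Proposition then finishes the argument.

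The step I expect to require the most care is this passage to product test functions combined with the localization that isolates $J$: one must check that the left-hand side of \eqref{weaktravel} depends continuously on $\phi$ in $\D(\R^2)$, so that density may legitimately be invoked, and that the two half-line integrals against $\vphi_1^\eps$ really do vanish in the limit — which is precisely where the one-sided boundedness of $W$ and $W'$ at the discontinuity and the boundedness of $K'\ast W$ come in. Everything else reduces to the change of variables and the integration by parts already performed in the derivation of \eqref{weaktravel}, so no new computation is needed.
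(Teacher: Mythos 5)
Your proposal is correct and follows essentially the same route as the paper: reduce to \eqref{weaktravel} via the preceding Proposition, test against the dense span of product test functions $\vphi_1(x-ct)\vphi_2(t)$, localize $\vphi_1$ at $0$ to extract the Rankine--Hugoniot condition, then support $\vphi_1$ in each half-line to obtain \eqref{WODE1}, with the converse being immediate since both $J$ and $g$ vanish. Your added justifications (continuity of the three functionals on $\D(\R^2)$, boundedness of $g$ near $0$ from the one-sided limits of $W,W'$ and the continuity of $K'\ast W$) only make explicit what the paper leaves implicit; note also that once $J=0$ and $g\equiv 0$ off $0$, \eqref{weaktravel} holds for arbitrary $\phi$ directly, so the density step in your converse is dispensable.
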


\begin{remark}
Note that constant functions $u$ are obviously strong solutions to \eqref{FWEqu2}, whereas a  piecewise constant, discontinuous, profile function $W$ cannot produce a weak traveling wave solution $u$: If $W(\xi) = A H(-\xi) + B H(\xi)$  with $A \neq B$, then $W' = (B - A) \de$ and this leads to a contradiction in \eqref{WODE1} due to the convolution term producing $(B - A) K(\xi)$ in this case.
\end{remark}

Now suppose that we have a discontinuous traveling wave solution according to the theorem. We may take the limits $\xi \to 0-$ and $\xi \to 0+$ in Equation \eqref{WODE1}, take the difference of the equations thus obtained, and note that $K' \ast W \in L^1 \ast L^\infty$ is (uniformly) continuous on $\R$ (\cite[14.10.6(ii)]{Dieudonne:V2E}) to deduce
$$
  (W(0+) - c) W'(0+) = (W(0-) - c) W'(0-).
$$
The Rankine-Hugoniot condition \eqref{RH} means $W(0+)  - c =  c - W(0-)$, which by discontinuity of $W$ requires $W(0-) \neq c$ and $W(0+) \neq c$, so that we obtain a  relation for the one-sided derivatives
\beq\label{dercond}
   W'(0+) + W'(0-) = 0
\eeq
as a further necessary condition.

\subsection{Analysis of the integro-differential equation for the traveling wave profile}

We observe that Equation \eqref{WODE1} can be written as
$$
   \Big(\frac{(W - c)^2}{2}\Big)'(\xi) + (K \ast W)'(\xi) = 0 \qquad \forall \xi \neq 0
$$
and we will argue that it may be understood as an equation of distributions gobally on $\R$, if $W$ is supposed to satisfy \eqref{RH}. 

For a piecewise continuous function $f$ on $\R$ denote by $[f]$ the measurable function with $[f](\xi) := f(\xi)$ for every $\xi \neq 0$ and $[f](0) := 0$ and recall that for a piecewise $C^1$ function $g$ on $\R$ we have for its distributional derivative $g' = [g'] + (g(0+) - g(0-)) \cdot \de$ (where $[g']$ uses the value of the pointwise classical derivative $g'(\xi)$ for $\xi \neq 0$). Employing this notation, we obtain thanks to \eqref{RH}
\begin{multline} \label{disder1}
   \Big(\frac{(W - c)^2}{2}\Big)' = \Big[\Big(\frac{(W - c)^2}{2}\Big)'\,\Big] + 
     \frac{(W(0+) - c)^2 - (W(0-) - c)^2}{2} \cdot \de\\ 
     = \Big[\Big(\frac{(W - c)^2}{2}\Big)'\,\Big]  + 
     \frac{\big(W(0+) - W(0-)\big)\big(W(0+) + W(0-) - 2 c\big)}{2} \cdot \de\\ =
     \Big[\Big(\frac{(W - c)^2}{2}\Big)'\,\Big] = \Big[ (W - c) W'\Big].
\end{multline}
Therefore, we have in the sense of distributions on $\R$
\beq\label{firstder}
   0 = \Big(\frac{(W - c)^2}{2}\Big)' + (K \ast W)' = 
      \Big(\frac{(W - c)^2}{2} + K \ast W \Big)',
\eeq
which implies that there is a constant $\al \in \R$ such that
\beq\label{const}
   \frac{(W - c)^2}{2} + K \ast W = \al.
\eeq

Similarly to the reasoning above, but now in addition employing Equation \eqref{dercond} (which was a consequence of \eqref{RH} and \eqref{WODE1}), we obtain 
\begin{multline} \label{disder2}
   \Big(\frac{(W - c)^2}{2}\Big)'' = \Big[ (W - c) W'\Big]'\\ = 
     \Big[ \big((W - c) W'\big)'\Big] + 
     \frac{\big(W(0+) - c\big) W'(0+) - \big(W(0-) - c\big) W'(0-)}{2} \cdot \de\\
     = \Big[ ((W - c) W')'\Big] + \frac{ W'(0+) + W'(0-)}{2} \cdot \de =
     \Big[ ((W - c) W')'\Big] = \Big[ (W')^2 + (W-c) W''\Big],
\end{multline}
which allows us to conclude upon differentiating in Equation \eqref{firstder} that
\beq\label{secondder}
   \Big[ (W')^2 + (W-c) W''\Big] + K'' \ast W = 0.
\eeq
Taking now the difference of the Equations \eqref{const} and \eqref{secondder} and recalling that $K - K'' = \de$ we have the following equation 
\beq\label{secondorderpreliminary}
   \frac{(W - c)^2}{2} -  \Big[ (W')^2 + (W-c) W'' \Big]  + W = \al,
\eeq
which gives a classical second-order differential equation on $\R \setminus \{0\}$.
So far, we have shown the first part of the following

\begin{proposition}\label{propweaksol} For every \emph{discontinuous} profile function $W$ the Rankine-Hugoniot condition \eqref{RH} and the integro-differential equation \eqref{WODE1} imply  Equation \eqref{secondorderpreliminary}. On the other hand,   \eqref{secondorderpreliminary} in combination with conditions \eqref{RH} and \eqref{dercond} implies \eqref{WODE1} and thus, according to Theorem \ref{weaktravelthm}, defines a weak traveling wave solution to the Cauchy problem (\ref{FWEqu}-\ref{IC}).
\end{proposition}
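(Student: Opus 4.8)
The first assertion requires nothing new: it is precisely the chain of identities \eqref{disder1}--\eqref{secondorderpreliminary} carried out above. The plan is therefore to concentrate on the converse and to show that each of those steps can be reversed once the two jump relations \eqref{RH} and \eqref{dercond} are in hand.

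First I would promote \eqref{secondorderpreliminary} from a classical identity on $\R\setminus\{0\}$ to a distributional identity on all of $\R$. By \eqref{disder2} --- whose derivation uses \eqref{RH} to annihilate the $\de$-contribution stemming from the first derivative and \eqref{dercond} to annihilate the one stemming from the second derivative --- the bracketed term $[(W')^2 + (W-c)W'']$ is exactly the full distributional second derivative of $(W-c)^2/2$. Since $W$, and hence $(W-c)^2/2$, lies in $L^\infty(\R)$, all terms are tempered distributions and \eqref{secondorderpreliminary} reads
$$\big(1 - \d_\xi^2\big)\frac{(W-c)^2}{2} + W = \al \quad\text{in } \D'(\R).$$

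Next I would convolve this with the kernel $K$. Using $K - K'' = \de$ (equivalently $(1-\d_\xi^2)K = \de$) and the standard rules for convolving the integrable, exponentially decaying kernel $K$ with bounded functions, the left-hand side collapses to $(W-c)^2/2 + K\ast W$, while $K\ast\al = \al$ because $\int_\R K = 1$; this is exactly \eqref{const}. Differentiating \eqref{const} once, writing $(K\ast W)' = K'\ast W$, and invoking \eqref{disder1} --- which needs only \eqref{RH} --- yields $[(W-c)W'] + K'\ast W = 0$ in $\D'(\R)$. Because $K'\ast W \in L^1\ast L^\infty$ is continuous on $\R$ while $[(W-c)W']$ coincides with the function $(W-c)W'$ off $0$, the distributional identity forces \eqref{WODE1} pointwise for every $\xi\neq 0$. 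Theorem \ref{weaktravelthm} then identifies $u(x,t)=W(x-ct)$ as a weak solution of (\ref{FWEqu}-\ref{IC}) with initial value $u_0=W$.

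The step I expect to be the delicate one is the distributional bookkeeping: ensuring that \eqref{disder1} and \eqref{disder2} are applied with precisely the hypotheses assumed to be available at each stage (so that \eqref{dercond} is not secretly invoked where only \eqref{RH} has been used, and conversely), and checking that the convolution identities with $K$, $K'$ and $K''$ are legitimate for a merely bounded profile $W$ --- which ultimately rests on $W$, $W'$, $W''$ being bounded, a consequence of \eqref{pwsmooth} together with the asymptotic conditions \eqref{BC}. None of this is hard, but it is where the argument must be handled with care.
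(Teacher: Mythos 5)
Your proposal is correct and follows essentially the same route as the paper: use \eqref{disder2} (valid under \eqref{RH} and \eqref{dercond}) to recognize the bracket as the full distributional second derivative, rewrite \eqref{secondorderpreliminary} as $(1-\d_\xi^2)\big(\tfrac{(W-c)^2}{2}+K\ast W\big)=\al$, invert by convolving with $K$ to recover \eqref{const}, and differentiate once (using \eqref{disder1}) to land on \eqref{WODE1}. Your explicit remark that \eqref{disder1} is what removes the $\de$-contribution after the final differentiation is a welcome clarification of a step the paper leaves implicit.
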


\begin{proof} It remains to prove the second part of the statement. We recall from the details of the above reasoning that conditions \eqref{RH} and \eqref{dercond} imply the equality \eqref{disder2} (while \eqref{RH} implies \eqref{disder1}). Applying this to \eqref{secondorderpreliminary} and using again the fact that $(1 - \d_x^2) K = K - K'' = \de$ we get
\begin{multline*}
     \al = \frac{(W - c)^2}{2} - \left( \frac{(W - c)^2}{2} \right)''   + W =
     (1 - \d_x^2) \frac{(W - c)^2}{2} + (K - K'') \ast W\\
     =  (1 - \d_x^2) \left( \frac{(W - c)^2}{2} + K \ast W \right).
\end{multline*}
Noting that $K \ast \al = \al$ we deduce
$\al = \frac{(W - c)^2}{2} + K \ast W$
and differentiate once to obtain \eqref{WODE1}.
\end{proof}

We now determine the constant $\al$ that appeared for the first time in \eqref{const} with the help of the boundary conditions \eqref{BC}: All the distributional equations above have a pointwise classical meaning in $\R \setminus \{ 0 \}$, hence we may evaluate \eqref{const} at any $\xi < 0$ or $\xi > 0$. Moreover, supposing \eqref{BC} we know that the term $(W(\xi) - c)^2/2$ possesses a limit when $\xi \to -\infty$ or $\xi \to \infty$. A brief inspection of the integral defining the convolution $K \ast W$ and appealing to the theorem of dominated convergence shows that this term also has a limit as $\xi \to -\infty$ or $\xi \to \infty$, namely $A$ or $B$, respectively. Therefore, we derive from \eqref{const} the relations
\beq\label{alphaABc}
    \frac{(A - c)^2}{2} + A = \al = \frac{(B - c)^2}{2} + B,
\eeq
in particular,
\beq\label{ABc}
    (A - B) \Big(1 + \frac{A+B}{2} - c\Big) = 0, \quad\text{hence}\quad A = B \quad\text{or}\quad c = 1 + \frac{A+B}{2}.
\eeq

\begin{remark} In case $A=B$ there are plenty of continuous solutions for the profile function $W$. In fact, the constant $A$ clearly is one, but also the peakon $p(x,t) + A$ with $A = (3c - 4)/3$ and $p$ as in \eqref{peakon}, and many more solitary wave solutions are given in \cite{CLH2012}. 
\end{remark}

As a further observation, to be made use of later, we note that from the boundary condition \eqref{BC} and the boundedness of $W$ we may deduce that
\beq\label{W'BC}
\lim_{\xi \to \pm \infty} W'(\xi) = 0,
\eeq
since the rule of de l'Hospital implies  
$\lim\limits_{\xi \to \pm \infty} W(\xi) = 
    \lim\limits_{\xi \to \pm \infty} \frac{e^{-\xi}W(\xi)}{e^{-\xi}} = 
    \lim\limits_{\xi \to \pm \infty} (W(\xi) - W'(\xi))$.

\subsection{Transformation to a first-order system of differential equations}

According to Proposition \ref{propweaksol} we may make use of \eqref{secondorderpreliminary} to construct a weak traveling wave solution by patching together two pieces of solutions, say, $W_1$ defined on $\xi < 0$ and $W_2$ defined on $\xi > 0$,  such that  the jump conditions \eqref{RH} and \eqref{dercond} at $\xi =0$ are satisfied. Therefore,  we may extract from \eqref{secondorderpreliminary} the second-order differential equation
\beq\label{secondorderdiffequ}
   \frac{(W - c)^2}{2} -  (W')^2 - (W-c) W''   + W = \al
\eeq
and consider pieces of solutions that are defined (at least) on closed half lines  $]-\infty, 0]$ or $ [0, \infty[$.

Note that a shift $\xi \mapsto \xi - \xi_0$ in the independent variable does not alter the structure of the various equations for $W$ considered so far (once the notation $[f]$ is adapted to jumps at $\xi_0$ and \eqref{WODE1} is required for $\xi \neq \xi_0$), hence we may always apply a shift to any appropriate smooth solution piece defined on some half line in order to produce a part of the prospective traveling wave profile to be patched at $\xi = 0$.

Let $W \col I \to \R$ denote a solution to \eqref{secondorderdiffequ}, where $I = \,]-\infty, 0]$ or $I = [0, \infty[$. We will see below how to remove the factor $W(\xi) - c$ in front of the second order derivative in \eqref{secondorderdiffequ}, if we require
\beq\label{Wversusc}
    (\forall \xi \in I \col W(\xi) < c) \quad\text{or}\quad 
    (\forall \xi \in I \col W(\xi) > c).
\eeq
Before doing so, we briefly discuss the situations where these assumptions are not met in the following
\begin{remark}\label{WequalcRemark} Apart from the trivial case with $W$ being the constant solution $c$ (implying thus also $c = A = B$) we have the following instances where $W$ takes on the value $c$ at some point:
\begin{trivlist}

\item{(i)}  If  $W(0-) = c$ or $W(0+) = c$,  then $W$ has to be continuous, for otherwise we obtain a contradiction in the Rankine-Hugoniot condition \eqref{RH} stating $W(0+) + W(0-) = 2 c$.

\item{(ii)} If $W(\xi) = c$ for some $\xi \neq 0$, then evaluation of  \eqref{secondorderpreliminary} at $\xi$ implies $\al = c - W'(\xi)^2$, hence 
$$
   \al \leq c. 
$$
We obtain then from (\ref{alphaABc}-\ref{ABc}) also that  $|A - B| \leq 2$, since $2c = 2 + A + B$ (in case $A \neq B$) yields
$$ 
   0 \leq 2 c - 2 \al = 2 + A + B - ((A - c)^2 + 2 A) = 
    2 + A + B - \big(\frac{(A-B)^2}{4} + 1 + A + B\big) 
    = 1 - \frac{(A-B)^2}{4}.
$$

\end{trivlist}
\end{remark}

In each of the two cases in \eqref{Wversusc} we attempt the change of coordinate $\xi = h(z)$, where  $h \col J \to I$ with $J = \,]-\infty, 0]$ or $J = [0, \infty[$ is the $C^3$ function determined from of the initial value problem
\begin{align}
   h'(z) &= W(h(z)) - c,\label{hODE}\\ 
   h(0) &= 0. \label{h0}
\end{align}

\begin{lemma}\label{translemma} Suppose $c \neq A$, $c \neq B$, and consider $h$ given by (\ref{hODE}-\ref{h0}), then the following hold:
\begin{trivlist} 

\item{(a)} In the first case of \eqref{Wversusc}, $W < c$, we may put $J = - I$ and obtain that $h$ is strictly decreasing and bijective.

\item{(b)} In the second case of \eqref{Wversusc}, $W > c$, we may put $J = I$ and obtain that $h$ is strictly increasing and bijective. 
\end{trivlist}
\end{lemma}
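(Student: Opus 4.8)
The plan is to analyze the initial value problem (\ref{hODE}-\ref{h0}) directly, treating it as an autonomous-in-disguise ODE whose right-hand side $\xi \mapsto W(\xi) - c$ is a $C^2$ function on $I$ (by the piecewise regularity \eqref{pwsmooth}, restricted to one of the closed half-lines). Since $W$ is bounded and $C^2$ on $I$ with bounded derivative near $0$ and with $W' \to 0$ at infinity by \eqref{W'BC}, the map $z \mapsto W(h(z)) - c$ is globally Lipschitz in $h$ along any bounded excursion, so the local solution extends as long as $h(z)$ stays in $I$; I would record this as the basic existence/uniqueness input and then argue that $h$ in fact never leaves $I$.

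First I would treat case (a), $W < c$ on $I$. Then $h'(0) = W(0) - c < 0$, so $h$ is initially strictly decreasing. As long as the solution exists, $h'(z) = W(h(z)) - c < 0$ because $h(z) \in I$ and $W < c$ there; hence $h$ is strictly monotone decreasing throughout its maximal interval of existence. If $I = \,]-\infty,0]$ then $h$ decreasing from $0$ means $h(z) < 0$ for $z > 0$, so $h$ stays in $I$ and, since $W - c$ is bounded, the solution exists for all $z \ge 0$; for $z < 0$ the solution increases away from $0$ and would leave $I = \,]-\infty,0]$ immediately, which forces the domain $J$ of $h$ on the correct side — this is exactly why one takes $J = -I$. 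I would verify surjectivity onto $I$ by showing $h(z) \to -\infty$ as $z \to +\infty$: if instead $h(z)$ decreased to a finite limit $\ell$, then $W(h(z)) - c \to W(\ell) - c < 0$, contradicting that $h'$ must tend to $0$ along a convergent orbit; alternatively, on $I = \,]-\infty,0]$ the boundary condition \eqref{BC} gives $W(\xi) \to A$, and $c \ne A$ (our hypothesis) means $W - c$ is bounded away from $0$ near $-\infty$, so $h$ cannot accumulate there in finite "speed" and must run off to $-\infty$, giving a bijection $J \to I$. The case $I = [0,\infty[$ in (a) is handled symmetrically with $J = \,]-\infty,0]$.

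Case (b), $W > c$, is the mirror image: now $h'(z) = W(h(z)) - c > 0$ so $h$ is strictly increasing, $J = I$ is the correct domain (the solution moves into $I$ from the endpoint $0$), and the same boundedness plus the separation $c \ne A$, $c \ne B$ near the infinite end of $I$ forces $h$ to be surjective onto $I$. The main obstacle — and the only place the hypotheses $c \ne A$, $c \ne B$ are really used — is ruling out that $h$ converges to the finite endpoint of $I$ at infinity without ever reaching it, i.e.\ that $h$ is a proper bijection rather than just an injection onto a proper subinterval; this is precisely where one invokes \eqref{BC} together with $c\neq A$, $c\neq B$ to keep $|W-c|$ bounded below at the relevant end, so that the "time" to traverse the tail is infinite and the image is all of $I$. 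Everything else is routine ODE monotonicity bookkeeping, and I would keep the write-up at that level rather than belaboring the Picard–Lindelöf details.
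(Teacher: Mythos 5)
Your argument is correct, but it runs along a different track than the paper's. The paper does not analyze the initial value problem (\ref{hODE}--\ref{h0}) qualitatively at all: it solves it by separation of variables, introducing $G(\xi) := \int_0^\xi \dd y/(W(y)-c)$, showing (from the strict sign of $W-c$, the boundedness of $W$, and $c\neq A$, $c\neq B$ so that $1/(W-c)$ tends to a finite nonzero limit at the infinite end of $I$) that $G\col I \to J$ is a $C^3$, strictly monotone bijection with $G(0)=0$, and then setting $h = G^{-1}$. This one construction delivers existence, uniqueness, the claimed $C^3$ regularity, monotonicity, and bijectivity simultaneously; in particular the divergence $\int^{\pm\infty} \dd y/(W(y)-c) = \pm\infty$ is exactly your ``infinite time to traverse the tail,'' so you have correctly located where $c\neq A$, $c\neq B$ and \eqref{BC} enter. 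Your route instead goes through Picard--Lindel\"of, the sign of the right-hand side for strict monotonicity and invariance of $I$, boundedness of $W$ to exclude finite-time blow-up, and the observation that a monotone solution converging to a finite $\ell \in I$ would have $h' \to W(\ell)-c \neq 0$, which is impossible --- all of which is sound. The only things worth tightening in a write-up are (i) the $C^3$ regularity of $h$ asserted in the text before the lemma, which in your approach needs a short bootstrap from $h' = W(h)-c$ with $W \in C^2(I)$, and (ii) a remark that in case (a) the solution of the IVP naturally lives on $J=-I$ because it exits $I$ immediately in the other $z$-direction, which you do address informally.
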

\begin{proof} We discuss the details in case (b) and for the subcsae $I = [0,\infty[$ only, since the proof for the other configurations is analogous except for obvious sign changes. 

From $c \neq B$ and from the boundary condition $\lim_{\xi \to \infty} W(\xi) = B$ we deduce for the $C^2$ function $W$ that $G(\xi) : = \int_0^\xi dy / (W(y) -c)$ is finite for every $\xi > 0$ while $\int_0^\infty dy / (W(y) -c) = \infty$, i.e.,  $\lim_{\xi \to \infty} G(\xi) = \infty$. Thus, the function $G \col [0,\infty[ \to [0,\infty[$ is $C^3$, strictly increasing, and bijective with $G(0) = 0$. From (\ref{hODE}-\ref{h0}) we obtain upon division, integration, and a change of variables that $h(z) = G^{-1}(z)$ for every $z \geq 0$.
\end{proof}

We put $U(z) := W(h(z))$ and $V(z) := W'(h(z))$, then we have
$$ 
U'(z) = W'(h(z)) h'(z) = W'(h(z)) \big(W(h(z)) - c\big) = V(z) \big(U(z) - c\big)
$$
and, with a view on \eqref{secondorderdiffequ}, also
\begin{multline*}
   V'(z) = W''(h(z)) h'(z) = W''(h(z)) \big(W(h(z)) - c\big)\\ =
    \frac{(W(h(z)) - c)^2}{2} -  W'(h(z))^2   + W(h(z)) - \al =
    \frac{(U(z) - c)^2}{2} -  V(z)^2   + U(z) - \al \\
    = -  V(z)^2 + \frac{U(z)^2}{2} + (1-c) U(z) + \frac{c^2}{2} - \al.
\end{multline*}
Thus, we have obtained the following first-order system of ordinary differential equations
\beq\label{firstordersystem}
    \begin{pmatrix} U \\ V\end{pmatrix}' =
   \begin{pmatrix} (U - c) V \\
      - V^2 + \frac{U^2}{2} + (1-c) U + \frac{c^2}{2} - \al
   \end{pmatrix}  =: F(U,V),
\eeq
which is equivalent to \eqref{secondorderdiffequ} under the conditions $c \neq A$ and $c \neq B$ as in Lemma \ref{translemma} for solutions restricted to any of the half planes $U < c$ or $U > c$, because $W$ can be recovered from $U$ via
\beq\label{UviahtoW}
   h(z) := \int_0^z (U(r) - c) \dd r \quad\text{and}\quad W(\xi) := U(h^{-1}(\xi)).
\eeq
We have to keep in mind that, due to the strictly decreasing transformation $\xi = h(z)$ in the case $W < c$ above, the trajectories of \eqref{secondorderdiffequ} in the half plane $U < c$ ``run backward'' in relation to the original part $\xi \mapsto W_1(\xi)$ of the solution in $\xi < 0$.

\begin{remark}\label{rembdandH} \begin{trivlist}
\item (i) Regarding boundary conditions at infinity, that is, if $U$ and $V$ are defined on some interval unbounded above or below, we obtain from \eqref{BC} and the properties of $h$ that
\beq\label{BC1}
   \lim_{z \to \mp\infty} U(z) = \lim_{\xi \to \pm \infty} W(\xi) \text{ in case } U < c,
  \text{ and } \lim_{z \to \mp\infty} U(z) = \lim_{\xi \to \mp \infty} W(\xi) 
  \text{ in case } U > c.
\eeq
And for the $V$-component we deduce 
\beq\label{BC2}
   \lim_{z \to \pm\infty} V(z) = 0
\eeq
directly from \eqref{W'BC}.

\item (ii) The function $H \col \R^2 \to \R$, given by (and adapted from \cite{ZT2008})
$$
  H(U,V) = (U-c)^2 \Big( V^2 - \frac{U^2}{4} + \frac{3c - 4}{6} U 
  + \al - \frac{c^2}{4} - \frac{c}{3} \Big),
$$
is constant along the solutions of \eqref{firstordersystem}, i.e., the orbits are subsets of the level sets of $H$, as can be verified by direct computation. 
\end{trivlist}
\end{remark}

Below we are going to study a little of the qualitative properties of \eqref{firstordersystem} which will enable us to construct a discontinuous wave profile $W$ as indicated in the beginning of the current subsection. To outline the construction in more detail, suppose for example that $B < c < A$ holds---this is a situation to be considered later on, see  \eqref{strongerABc} and the discussion preceding it---, then we have $W(\xi) > c$ for $\xi$ ``near $-\infty$'' and $W(\xi) < c$ for $\xi$  ``near $\infty$'' from the boundary conditions \eqref{BC}. Patching up the solution $W$ then requires: 1. Searching for two solutions to \eqref{firstordersystem} in the form $P = (U_1, V_1) \col ]-\infty,b_1] \to \R^2$ and $Q = (U_2, V_2) \col ]-\infty, b_2] \to \R^2$  which satisfy 
\begin{align}
   \lim_{z \to -\infty} U_1(z) = A, & & U_1(b_1) + U_2(b_2) = 2 c, & &
   \lim_{z \to -\infty} U_2(z) = B, \label{Ucond} \\
   \lim_{z \to -\infty} V_1(z) = 0, & & V_1(b_1) + V_2(b_2) = 0, & &
   \lim_{z \to -\infty} V_2(z) = 0 \label{Vcond}
\end{align}
(recall that the conditions on $U_2$ and $V_2$ account for the ``backward running'' in the region $U < c$). 2. A backtransformation via $z= h^{-1}(\xi)$ as in \eqref{UviahtoW} of appropriately shifted versions of $z \mapsto U_1(z)$ and $z \mapsto U_2(z)$ as patches for $\xi \mapsto W(\xi)$. In this process, the conditions in \eqref{Ucond}  imply that the original boundary conditions \eqref{BC} as well as the Rankine-Hugoniot condition \eqref{RH} are satisfied by $W$, while \eqref{Vcond} guarantee that also \eqref{dercond} and \eqref{W'BC} hold for $W'$.

The relevant equilibrium points of the vector field $F$ in \eqref{firstordersystem} are determined from
$$
    (U - c) V = 0 \quad\text{and}\quad   
    V^2 = \frac{U^2}{2} + (1-c) U + \frac{c^2}{2} - \al
$$
with the additional restriction to $U \neq c$ in case of our intended construction of wave profile functions. (Note that $U=c$ implies $V^2 = c - \al$, which gives again the necessary condition $\al \leq c$ found already in Remark \ref{WequalcRemark}(ii).) We obtain then the two solutions $S_- := (U_0^-,0)$ and $S_+ := (U_0^+,0)$ with 
$$
   U_0^{\pm} = c-1 \pm \sqrt{1 + 2 (\al -c)},
$$
provided that $1 + 2 (\al -c) \geq 0$.

Recall from (\ref{alphaABc}-\ref{ABc}) that in case $A \neq B$ we have $2 c = 2 + A+B$, hence $2 \al =  A + B + 1 + \frac{(A-B)^2}{4}$ and therefore
$$
   U_0^{\pm} = \frac{A+B}{2} \pm \frac{|A-B|}{2},
$$
which means that $U_0^- = \min(A,B)$ while $U_0^+ = \max(A,B)$. 

\beq\label{ABCond}
\text{In all further analysis we focus on the case $A > B$, hence   $U_0^- = B$ and $U_0^+ = A$.}
\eeq

The Jacobian of the vector field $F$ is 
$$
    D F (U,V) = \begin{pmatrix}
       -c V & U - c \\
       U + 1 - c & - 2 V
    \end{pmatrix},
$$
which defines linearizations of the system at the eqilibrium points $S_- = (B,0)$ and $S_+ = (A,0)$ with the respective constant coefficient matrices
$$
      L_-  = \begin{pmatrix}
       0 & -1 - \frac{A-B}{2} \\
       - \frac{A-B}{2} & 0
    \end{pmatrix} \quad\text{and}\quad
     L_+  = \begin{pmatrix}
       0 & -1 + \frac{A-B}{2} \\
       \frac{A-B}{2} & 0
    \end{pmatrix}.
$$
The eigenvalues of $L_-$ are
$$
    \la_1 := - \frac{1}{2}\sqrt{(A-B)(2 + A-B)} < 0 < 
      \frac{1}{2}\sqrt{(A-B)(2 + A-B)} =: \la_2,
$$
hence we have a saddle at $S_- = (B,0)$. We have the eigenvectors 
$$
    r_1 := \begin{pmatrix} \sqrt{2 + A-B}\\ \sqrt{A-B}\end{pmatrix},  
    r_2 := \begin{pmatrix} \sqrt{2 + A-B}\\ - \sqrt{A-B}\end{pmatrix} 
$$    
for $\la_1$, $\la_2$, respectively.

The eigenvalues $\mu$ of $L_+$ are determined from $\mu^2 = (A-B)(A-B - 2)/4$. If we 
strengthen \eqref{ABCond} to the requirement
\beq\label{strongerAB}
    A > B + 2,
\eeq
then $S_+ = (A,0)$ is a saddle point as well, since
$$
    \mu_1 := - \frac{1}{2}\sqrt{(A-B)(A-B - 2)} < 0  <
     \frac{1}{2}\sqrt{(A-B)(A-B - 2)}  =: \mu_2.
$$
In this case, there are the eigenvectors 
$$
   s_1 := \begin{pmatrix} \sqrt{A-B - 2}\\ - \sqrt{A-B}\end{pmatrix},     
     s_2 := \begin{pmatrix} \sqrt{A-B - 2}\\ \sqrt{A-B}\end{pmatrix}
$$ 
for the eigenvalues $\mu_1$, $\mu_2$, respectively.

\begin{remark}  Recall from Remark \ref{WequalcRemark}(ii) that the inequality \eqref{strongerAB} also ensures that we cannot have $W(\xi) = c$, thus  supporting the separation into the open half planes $U < c$ and $U > c$.
\end{remark}

We observe that \eqref{strongerAB} in combination with \eqref{ABc}, i.e., $c = 1 + \frac{A + B}{2}$, gives the refined condition
\beq\label{strongerABc}
   B + 2 <  c  < A.
\eeq
In particular, we see that $S_- = (B,0)$ lies in the left half plane $U < c$, while the saddle point $S_+ = (A,0)$ belongs to the region with $U > c$. A prospective discontinuous wave profile function thus has to be constructed from a trajectory $P$ in the right half plane ``emerging at $z = -\infty$'' from $(A,0)$ with a jump to a trajectory $Q$ in the left half plane  connecting to $(B,0)$ asymptotically, where the points of ``departure'' from $P$ and of ``arrival'' on $Q$ have to be chosen such that the middle parts in the conditions (\ref{Ucond}-\ref{Vcond}) are satisfied.

\medskip

\begin{center}
\includegraphics[width=0.8\textwidth]{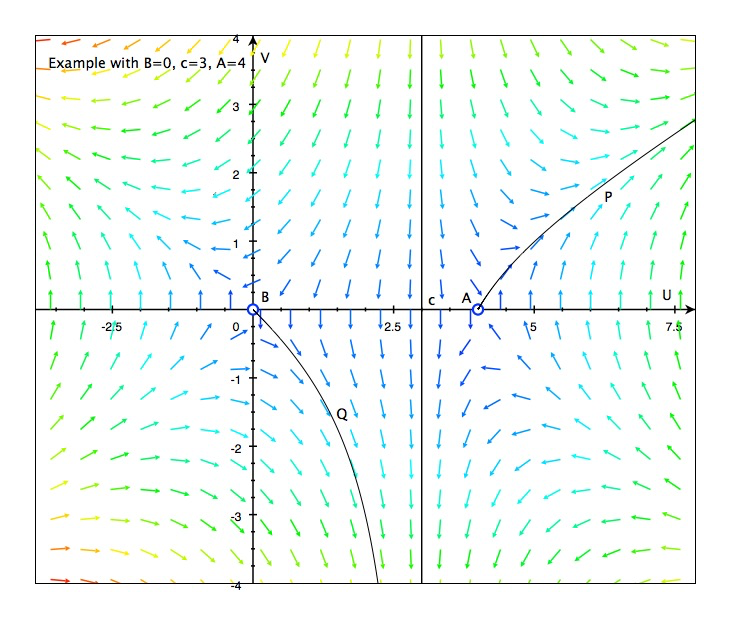}
\end{center}

\subsection{Existence of a discontinuous traveling wave as weak solution}

We suppose that \eqref{strongerABc} holds, i.e., $B + 2 < c < A$, such that we have saddle points at $S_-=(B,0)$ and $S_+=(A,0)$ for the dynamics according to \eqref{firstordersystem} as discussed in the previous subsection. From the eigenvectors $r_2$ and $s_2$ corresponding to the positive eigenvalues in each of the saddle points, we see that there is a unique trajectory $P$ with $\lim_{z \to - \infty} P(z) = (A,0)$ leaving at $(A,0)$ in the direction up (growing $V$) and to the right (growing $U$) and a unique trajectory $Q$ with $\lim_{z \to - \infty} Q(z) = (B,0)$ leaving at $(B,0)$ in the direction down (decreasing $V$) and to the right (growing $U$).

\begin{lemma}\label{intersectionlemma} Suppose that \eqref{strongerABc} holds and 
let $P = (U_1,V_1)$ and $Q = (U_2, V_2)$ be the trajectories defined above. There are unique parameter values $b_1, b_2 \in \R$, such that 
\beq\label{reflection}
        U_1(b_1) + U_2(b_2) = 2c \quad\text{and}\quad V_1(b_1) + V_2(b_2) = 0
\eeq
hold.
\end{lemma}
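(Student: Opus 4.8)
\emph{Plan.} The two equations in \eqref{reflection} say precisely that $Q(b_2)$ is the image of $P(b_1)$ under the point reflection $(U,V)\mapsto(2c-U,-V)$ about the centre $(c,0)$. The idea is to parametrize each of the two orbits by its first coordinate $U$ near the corresponding saddle and to exploit the conserved quantity $H$ of Remark~\ref{rembdandH}(ii), which can be rewritten as $H(U,V)=(U-c)^2\bigl(V^2+q(U)\bigr)$ with $q(U):=-U^2/4+(3c-4)U/6+\al-c^2/4-c/3$. Since $H$ is constant on orbits, along $P$ we have $V_1^2=H(A,0)/(U_1-c)^2-q(U_1)$ and along $Q$ we have $V_2^2=H(B,0)/(U_2-c)^2-q(U_2)$. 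Moreover $P$ leaves $S_+=(A,0)$ in the direction of $s_2$, so $U_1>c$ is increasing and $V_1>0$ while $U_1$ is monotone, and $Q$ leaves $S_-=(B,0)$ in the direction of $r_2$, so $U_2<c$ is increasing and $V_2<0$ while $U_2$ is monotone; hence the signs of the $V$-components are fixed from the outset.

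\emph{Reduction to a cubic.} Put $u:=U_1(b_1)$. The first identity in \eqref{reflection} forces $U_2(b_2)=2c-u$; since $(2c-u-c)^2=(u-c)^2$, squaring the second identity and inserting the two level-set relations turns \eqref{reflection} into
\[
  \frac{H(A,0)-H(B,0)}{(u-c)^2}=q(u)-q(2c-u)=-\tfrac{4}{3}(u-c),
\]
where the last equality is an elementary computation in which the constant term of $q$ drops out. Thus $\tfrac43(u-c)^3=H(B,0)-H(A,0)$. Evaluating $H$ at the saddles, $H(A,0)=(A-c)^2q(A)=\tfrac1{12}(A-c)^3\bigl(3(A-c)+4\bigr)$ and likewise for $H(B,0)$, and using $2c=2+A+B$ from \eqref{ABc} one obtains $H(B,0)-H(A,0)=\tfrac43(A-c+1)^3$; hence the cubic has the unique real root $u=A+1$, and correspondingly $U_2(b_2)=2c-u=B+1$. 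Then $U_1(b_1)+U_2(b_2)=A+B+2=2c$, while $V_1(b_1)^2=V_2(b_2)^2$ by the computation just made and $V_1(b_1)>0>V_2(b_2)$, so $V_1(b_1)+V_2(b_2)=0$. Consequently $(b_1,b_2)$ solves \eqref{reflection} as soon as the values $A+1$ and $B+1$ are actually attained by $U_1$ and $U_2$, and conversely any solution of \eqref{reflection} must have exactly these $U$-values.

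\emph{Attainment of the required values and uniqueness.} It remains to check that $U_1$ increases monotonically past $A+1$ along $P$ and $U_2$ past $B+1$ along $Q$. As long as $V_1>0$ we have $U_1'=(U_1-c)V_1>0$, so it suffices that $V_1$ does not vanish for $U_1\in(A,A+1]$; writing $t=U-c$, this means that the degree-four polynomial $t\mapsto H(A,0)-t^2q(c+t)$ has no zero in $0<t\le A-c+1$ besides the (automatic) double zero at $t=A-c$ coming from the self-intersection of the level curve at the saddle. Dividing out $\bigl(t-(A-c)\bigr)^2$ leaves a quadratic whose coefficients are positive under \eqref{strongerABc} (recall $A-c>0$), so it has no positive root; hence $V_1>0$ on all of $P$ and $U_1$ is a strictly increasing bijection of the maximal interval of $P$ onto an interval containing $(A,A+1]$. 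The same argument applied to $Q$ uses that the complementary quadratic there has negative discriminant under \eqref{strongerABc} (because $c-B>2$), so $V_2<0$ on all of $Q$ and $U_2$ is strictly increasing onto an interval containing $(B,B+1]$. Strict monotonicity gives a unique $b_1$ with $U_1(b_1)=A+1$ and a unique $b_2$ with $U_2(b_2)=B+1$, and by the last sentence of the previous paragraph these are the only possible values. This proves the lemma.

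\emph{Main obstacle.} The only genuine work is the monotonicity step: ruling out that $V_1$ (resp.\ $V_2$) changes sign before the target $U$-value $A+1$ (resp.\ $B+1$) is reached. This is exactly where the precise form of $H$ and the strict inequalities in \eqref{strongerABc} enter; once the two branches are known to extend that far, the reduction to the cubic and the closed-form answer $U_1(b_1)=A+1$, $U_2(b_2)=B+1$ make the verification of \eqref{reflection} and the uniqueness of $(b_1,b_2)$ immediate.
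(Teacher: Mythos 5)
Your argument is correct in substance, but it takes a genuinely different route from the paper's at the decisive step. The paper proves monotonicity of $(U_1,V_1)$ and $(U_2,V_2)$ by differential inequalities (using the level set of $H$ only to bound $V_1^2$ from above), shows that $P$ escapes to infinity in both components while $Q$ terminates with $U_2\to c$ and $V_2\to-\infty$, and then obtains existence and uniqueness of the intersection of $Q$ with the reflected curve $\tilde{P}$ from a monotone-graph, intermediate-value argument; the intersection point is never computed. You instead use the first integral $H$ on both orbits to eliminate $V$ and reduce \eqref{reflection} to the cubic $\tfrac43(u-c)^3=H(B,0)-H(A,0)$, which you solve in closed form. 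I checked your identities: with $d:=(A-B)/2$ one has $q(u)-q(2c-u)=-\tfrac43(u-c)$, $H(A,0)=\tfrac1{12}(d-1)^3(3d+1)$ and $H(B,0)=\tfrac1{12}(d+1)^3(3d-1)$, hence $H(B,0)-H(A,0)=\tfrac43d^3$ and indeed $U_1(b_1)=A+1$, $U_2(b_2)=B+1$; likewise $H(A,0)-t^2q(c+t)$ factors as $\tfrac14\bigl(t-(d-1)\bigr)^2\bigl(t^2+(2d+\tfrac23)t+(d-1)(d+\tfrac13)\bigr)$, and the analogous cofactor for $Q$ has discriminant $\tfrac{16}{9}-\tfrac{16d}{3}<0$, so your sign claims hold under \eqref{strongerABc}. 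Your route buys explicit jump values ($W(0-)=A+1$, $W(0+)=B+1$ for the eventual profile) and a very clean uniqueness argument (a real cubic has exactly one real root); the paper's route avoids this algebra but produces no formula.

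The one genuinely incomplete point is the attainment step. Strict monotonicity of $U_1$ and $U_2$ only makes them increasing bijections onto their ranges; you assert without proof that these ranges contain $(A,A+1]$ and $(B,B+1]$. A priori $U_1$ could increase to a limit $\le A+1$ on its maximal interval of existence, or the solution could blow up before reaching $A+1$. Both are excluded by the same level-set identity you already use: for $A<U_1\le A+1$ the relation $(U_1-c)^2V_1^2=H(A,0)-(U_1-c)^2q(U_1)$ keeps $V_1$ bounded (so no finite-time blow-up in that strip), and if $U_1$ increased to a limit $U_\infty\le A+1$ then $V_1$ would tend to the corresponding strictly positive value $V_\infty$ and $U_1'\to(U_\infty-c)V_\infty>0$, contradicting boundedness of $U_1$; the same argument (or the paper's Claim 3, which shows $U_2\to c>B+1$) handles $Q$. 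With those few sentences added, your proof is complete.
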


\begin{proof} We proceed in several steps, proving first separate claims for $P$ and $Q$.

\medskip
\noindent\emph{Claim 1: $U_1$ and $V_1$ are both strictly increasing.}

The definition of $P = (U_1,V_1)$ implies that 
\beq\label{Pregion}
   U_1(z) > A \quad\text{and}\quad V_1(z) > 0
\eeq
holds for $z$ at least in some interval of the form $]-\infty,z_0[$. For every $z \in \R$ satisfying \eqref{Pregion}, the first line in \eqref{firstordersystem} reads  
$$
   U_1'(z) = (U_1(z) - c)\, V_1(z) > (A - c) V_1(z) > 0,
$$
hence $U_1$ is strictly increasing and the first condition in \eqref{Pregion} stays valid. 

The second line of system \eqref{firstordersystem} and \eqref{alphaABc} give
$$
   V_1'(z) = - V_1(z)^2 + \frac{U_1(z)^2}{2} + (1-c) U_1(z) + \frac{c^2}{2} - \al 
   = - V_1(z)^2 + \frac{U_1(z)^2}{2} + (1-c) U_1(z) - \frac{A^2}{2} + (c-1) A.
$$
From Remark \ref{rembdandH} we also know that $P(z)$ lies on the level set of the function $H$ for the value $H(A,0)$, which implies that the relation
\begin{multline}\label{Hrelation}
      \big(U_1(z)-c\big)^2 \Big( V_1(z)^2 - \frac{U_1(z)^2}{4} + \frac{3c - 4}{6} U_1(z) 
  + \al - \frac{c^2}{4} - \frac{c}{3} \Big)\\ = 
  \big(A - c\big)^2 \Big(\frac{A^2}{4} + \Big(\frac{1}{3} - \frac{c}{2}\Big) A + 
  \frac{c^2}{4} - \frac{c}{3}\Big)
\end{multline}
holds. By \eqref{Pregion} we have $U_1(z) - c > A - c$, hence the reverse inequality holds for the second factors in the above equation, i.e., 
$$
   V_1(z)^2 - \frac{U_1(z)^2}{4} + \frac{3c - 4}{6} U_1(z) 
  + \al - \frac{c^2}{4} - \frac{c}{3} < \frac{A^2}{4} + 
  \Big(\frac{1}{3} - \frac{c}{2}\Big) A + \frac{c^2}{4} - \frac{c}{3},
$$ 
which, upon calling again on \eqref{alphaABc}, we may rewrite in the form 
$$
   V_1(z)^2 < \frac{U_1(z)^2}{4} - \frac{3c - 4}{6} U_1(z) - \frac{A^2}{4} 
   + \Big(\frac{c}{2} - \frac{2}{3}\Big) A.
$$
Thus, we have a lower bound for $-V_1(z)^2$ that we insert in the equation for $V_1'(z)$ and find (using the condition \eqref{Pregion} towards the end of the following estimate)
\begin{multline*}
   V_1'(z) > -  \frac{U_1(z)^2}{4} + \frac{3c - 4}{6} U_1(z) + \frac{A^2}{4} 
   - \Big(\frac{c}{2} - \frac{2}{3}\Big) A 
   + \frac{U_1(z)^2}{2} + (1-c) U_1(z) - \frac{A^2}{2} + (c-1) A\\
   = \frac{U_1(z)^2}{4} + \Big(\frac{1}{3} - \frac{c}{2}\Big) U 
     - \frac{A^2}{4} - \Big(\frac{1}{3} - \frac{c}{2}\Big) A =
     \frac{U_1(z)^2 - A^2}{4} + \Big(\frac{1}{3} - \frac{c}{2}\Big) (U - A) \\
     = (U - A) \Big(\frac{U_1(z) + A}{4}  + \frac{1}{3} - \frac{c}{2}\Big) >
     (U - A) \Big( \frac{A + A}{4}  + \frac{1}{3} - \frac{c}{2} \Big) =
     (U - A) \Big( \frac{A -c}{2}  + \frac{1}{3} \Big) > 0.
\end{multline*}
Hence we see that $V_1$ is strictly increasing and the conditions in \eqref{Qregion} remain valid  throughout.

\medskip
\noindent\emph{Claim 2: Let $]-\infty,p[$ be the maximal interval of existence for the solution $P$ (regardless whether $p$ is finite or $p = \infty$, though we conjecture the latter), then 
$U_1(z) \to \infty$ and $V_1(z) \to \infty$ as $z \to p$.}

We recall that $P$ stays entirely in the region \eqref{Pregion}, where in particular $U_1 > A > c$. Hence the norm $\Norm{P(z)}$ has to become unbounded as $z \to p$, since the solution cannot reach the boundary points of the domain along $U = c$. Thus, at least one of the components $U_1(z)$ or $V_1(z)$ is unbounded as $z \to p$. But observing $U_1(z) - c > A - c > 0$ we may deduce from \eqref{Hrelation} that either both component functions, $U_1(z)$ and $V_1(z)$,  stay bounded or both are unbounded as $z \to p$. We conclude that both have to be unbounded.

\medskip
\noindent\emph{Claim 3: The function $U_2$ is strictly increasing, while $V_2$ is strictly decreasing. Moreover, the maximal interval of existence for the solution curve $Q$ is of the form $]-\infty, q[$ with some $q \in \R$ and, as $z \to q$, we have $V_2(z) \to -\infty$ and $U_2(z) \to c$.}

By definition of $Q = (U_2,V_2)$, we have 
\beq\label{Qregion}
   B < U_2(z) < c \quad\text{and}\quad V_2(z) < 0
\eeq
for $z$ at least in some interval bounded only on the right (note that by connectedness, $U < c$ has to hold for every solution starting somewhere in the left half plane). For every $z \in \R$ satisfying \eqref{Qregion}, we immediately deduce from the first line in \eqref{firstordersystem} that 
$$
   U_2'(z) = (U_2(z) - c)\, V_2(z) > 0,
$$
hence $U_2$ is strictly increasing and the first condition in \eqref{Qregion} will continue to be valid. 

From the second line in \eqref{firstordersystem} we obtain
$$
   V_2'(z) = - V_2(z)^2 + \frac{U_2(z)^2}{2} + (1-c) U_2(z) + \frac{c^2}{2} - \al 
   = - V_2(z)^2 + \frac{\big(U_2(z) + 1 - c\big)^2 - 1}{2}  + c - \al.
$$
We note that \eqref{Qregion} and \eqref{ABc} imply $1 = c + 1 - c >  U_2(z) + 1 - c > B + 1 - c = \frac{B - A}{2}$ and by \eqref{strongerABc} we then have
$$
     | U_2(z) + 1 - c| \leq \max (1, \frac{A - B}{2}) = \frac{A - B}{2}.
$$
Moreover,  \eqref{alphaABc} gives $c - \al = \frac{1}{2} - \frac{(A - B)^2}{8}$
 and therefore,
$$
    V_2'(z) \leq - V_2(z)^2 + \frac{\frac{(A - B)^2}{4} - 1}{2}  
    + \frac{1}{2} - \frac{(A - B)^2}{8} = - V_2(z)^2 < 0,
$$
which implies that $V_2$ is strictly decreasing and the second condition in \eqref{Qregion} will hold throughout. In particular, we obtained the inequality $-V_2' / V_2^2 \geq - 1$,  which we may integrate over an interval $[z_0, z]$ to obtain the chain of inequalities
$$
   0 > \frac{1}{V_2(z)}  \geq \frac{1}{V_2(z_0)} + z - z_0,
$$ 
which implies $z < z_0 - 1/V_2(z_0)$ and that $V_2(z) \to - \infty$ when $z$ approaches the upper bound. Therefore, the maximal interval of existence of $Q$ is of the form $]-\infty,q[$ with finite $q \in \R$.

By boundedness and monotonicity of $U_2$, there exists $c_0 := \lim_{z \to q}U_2(z)$ satisfying $B < c_0 \leq c$. We may argue as in \eqref{W'BC} to see that $\lim_{z \to q}U_2'(z) = 0$. On the other hand, $U_2'(z) = (U_2(z) - c) V_2(z)$ would necessarily tend to $+\infty$ (as $z \to q$), unless $c_0 = c$.

\medskip

Combining now the information from the claims proved above, we complete the proof by the following observation: If $\tilde{P}$ denotes the pointwise reflection of $P$ at $(c,0)$, i.e., $\tilde{P}(z) = 2 (c,0) - P(s)$, then $Q$ and $\tilde{P}$ have a unique intersection point (in the region $B < U < c$, $V < 0$), which corresponds to a unique parameter value $b_2$ along $Q$ and to a unique parameter value $b_1$ along $P$. The condition of reflection at $(c,0)$ reproduces precisely the relation \eqref{reflection}.
\end{proof}

The above lemma shows that we can indeed find solutions to \eqref{firstordersystem} satisfying (\ref{Ucond}-\ref{Vcond}), which can therefore be used as solution patches for a discontinuous wave profile $W$.

\begin{theorem} If $B+2 < c < A$, then there exists a discontinuous wave profile $W$ defining a weak traveling wave solution $u$ to the Cauchy problem (\ref{FWEqu}-\ref{IC}) with initial value $u_0 = W$. 
\end{theorem}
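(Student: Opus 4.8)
The plan is to turn the two phase-plane trajectories supplied by Lemma \ref{intersectionlemma} back into a piecewise-$C^2$ profile function by means of the inverse change of variables \eqref{UviahtoW}, to check the jump relations, and then to invoke Proposition \ref{propweaksol} together with Theorem \ref{weaktravelthm}.

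First I would note that the hypothesis $B+2 < c < A$ is exactly condition \eqref{strongerABc}, so the whole phase-plane analysis of the previous two subsections applies: $S_- = (B,0)$ and $S_+ = (A,0)$ are saddle points of \eqref{firstordersystem}, and Lemma \ref{intersectionlemma} provides the trajectory $P = (U_1,V_1)$ emanating from $(A,0)$ in the open half-plane $U > c$, the trajectory $Q = (U_2,V_2)$ emanating from $(B,0)$ in the strip $B < U < c$, and parameters $b_1, b_2$ realizing the reflection relation \eqref{reflection}. Translating the independent variable I may assume $b_1 = b_2 = 0$ and restrict both curves to $]-\infty,0]$ (which is legitimate, since $0$ lies strictly inside the maximal interval of existence in each case); with these normalisations the two pieces satisfy conditions (\ref{Ucond}-\ref{Vcond}) verbatim.

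Next I would carry out the backtransformation separately on each piece. For $P$, which lives in $U > c$, case (b) of Lemma \ref{translemma} applies with $I = J = \,]-\infty,0]$: the function $h_1(z) = \int_0^z (U_1(r)-c)\,\dd r$ is a strictly increasing $C^3$ bijection of $]-\infty,0]$ onto itself, and by the equivalence with \eqref{secondorderdiffequ} recorded after \eqref{firstordersystem} the function $W_1 := U_1 \circ h_1^{-1}$ is a $C^2$ solution of \eqref{secondorderdiffequ} on $]-\infty,0]$ with $W_1(0-) = U_1(0)$, $W_1'(0-) = V_1(0)$, and $\lim_{\xi\to-\infty}W_1(\xi) = A$ by \eqref{BC1}. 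For $Q$, which lives in $U < c$, case (a) of Lemma \ref{translemma} gives the \emph{orientation-reversing} $C^3$ bijection $h_2(z) = \int_0^z (U_2(r)-c)\,\dd r$ of $]-\infty,0]$ onto $[0,\infty[$ (it tends to $+\infty$ as $z\to-\infty$ because $c-U_2(z)\to c-B>0$), and $W_2 := U_2 \circ h_2^{-1}$ is a $C^2$ solution of \eqref{secondorderdiffequ} on $[0,\infty[$ with $W_2(0+) = U_2(0)$, $W_2'(0+) = V_2(0)$, and $\lim_{\xi\to+\infty}W_2(\xi) = B$. Setting $W := W_1$ on $]-\infty,0[$ and $W := W_2$ on $]0,\infty[$ (the value at $0$ being irrelevant) yields a bounded function, piecewise $C^2$ in the sense of \eqref{pwsmooth}, that satisfies the boundary conditions \eqref{BC}; it is genuinely discontinuous, since $W(0-) = U_1(0) > A > c > U_2(0) = W(0+)$.

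Finally I would verify the jump conditions and close the argument. The two equalities in \eqref{reflection} read $W(0+) + W(0-) = U_1(0) + U_2(0) = 2c$ and $W'(0+) + W'(0-) = V_1(0) + V_2(0) = 0$, i.e.\ the Rankine-Hugoniot condition \eqref{RH} and the derivative condition \eqref{dercond}; and since $W$ solves \eqref{secondorderdiffequ} classically on $\R\setminus\{0\}$ it satisfies \eqref{secondorderpreliminary} as an identity of distributions on $\R$. Because $c\neq A$ and $c\neq B$, the second part of Proposition \ref{propweaksol} applies and shows that $W$ obeys \eqref{WODE1} and \eqref{RH}; hence, by Theorem \ref{weaktravelthm}, $u(x,t) := W(x-ct)$ is a weak solution of (\ref{FWEqu}-\ref{IC}) with $u_0 = W$, discontinuous because $W$ is. Essentially all of the mathematical content sits in Lemma \ref{intersectionlemma}; within the proof of the theorem itself the only step needing genuine care is the orientation-reversing substitution on the $U<c$ piece — one has to keep track of the fact that increasing $\xi$ there corresponds to decreasing $z$, so that the ``backward-running'' conditions on $U_2$ and $V_2$ in (\ref{Ucond}-\ref{Vcond}) translate into the correct one-sided data for $W$ at $0$ and the correct limit $B$ at $+\infty$.
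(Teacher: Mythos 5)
Your proposal is correct and follows essentially the same route as the paper's own (much terser) proof: backtransform the two trajectories from Lemma \ref{intersectionlemma} via \eqref{UviahtoW} and Lemma \ref{translemma}, read off \eqref{RH} and \eqref{dercond} from \eqref{reflection}, and conclude with Proposition \ref{propweaksol}. Your careful handling of the orientation reversal on the $U<c$ piece and the explicit verification of the one-sided data are details the paper leaves implicit, but there is no substantive difference in approach.
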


\begin{proof} 
Let $U_1$ and $U_2$ be the first components of the solution curves $P$ and $Q$ introduced above and let $b_1$, $b_2$ be as in Lemma \ref{intersectionlemma}. Note that the hypothesis $B+2 < c < A$ implies that Lemma \ref{translemma} and the subsequent transformations between the second-order equation for $W$ and the first-order system for $(U,V)$ are applicable and preserve equivalence. 
Construct $h$ from $U$ as in \eqref{UviahtoW} and put $W(\xi) := U_1 (h^{-1}(\xi) + b_1)$, if $\xi < 0$, and $W(\xi) := U_2(h^{-1}(\xi) + b_2)$, if $\xi > 0$. From (\ref{Ucond}-\ref{Vcond}) and the observations in the discussion of these conditions, we see that the proof is complete by appealing to the second statement in Proposition \ref{propweaksol}.
\end{proof}

These specific discontinuous traveling wave solutions serve here more as a mathematical test case for the weak solution concept and will not be useful as models of water wave profiles. One might see them as a  reminiscence of shock wave solutions for the Burgers equation in its nonlocal perturbation described by 
\eqref{FWEqu2}, or rather in its weak form by \eqref{weaksol}.

%%%%%%%%%%%%%%%%%%%%%%%%%%%%%%%%%%%%%%%%%%
%%%%%%%%%%%%%%%  References   %%%%%%%%%%%%%%%%%%%%%
%%%%%%%%%%%%%%%%%%%%%%%%%%%%%%%%%%%%%%%%%%

\bibliography{G2018}
\bibliographystyle{abbrv}

\end{document}